\newtheorem{theorem}{Theorem}[subsection]
\newtheorem{corollary}{Corollary}[theorem]
\newtheorem{lemma}[theorem]{Lemma}
\theoremstyle{definition}
\newtheorem{definition}[theorem]{Definition}
\newtheorem{Question}[theorem]{Question}
\title{A topological embedding of $T_3$ into $\mathbb{Z}^2$}
\author{Samuel Kelly}
\date{26 January 2024}
\begin{document}
\newcommand{\cut}[0]{\mathrm{cut}}
\newcommand{\cutwidth}[0]{\mathrm{cutwidth}}
\newcommand{\bandwidth}[0]{\mathrm{bandwidth}}
\newcommand{\sep}[0]{\mathrm{sep}}
\newcommand{\cw}[0]{\mathrm{cw}}
\newcommand{\bw}[0]{\mathrm{bw}}
\newcommand{\wir}[0]{\mathrm{wir}}
\newcommand{\vol}[0]{\mathrm{vol}}
\newcommand{\diam}[0]{\mathrm{diam}}
\newcommand{\conn}[0]{\mathrm{conn}}

\newcommand{\Ryan}[1]{\textcolor{red}{#1}}

\maketitle

\setcounter{tocdepth}{2}

\begin{abstract}
    We prove that for any finite tree $T$ with $n$ vertices and maximal degree $3$, there is a topological embedding of $T$ into the integer grid $Z^2$ which maps vertices to vertices and whose image meets at most $\frac{7}{3}n$ vertices. This recovers a weaker form of a result due to Valiant \cite{Valiant} with stronger constants. We address Question 7.7 of \cite{barrett2021thick}, giving the first example of a pair of graphs $X,Y$ such that there is no regular map $X\to Y$ but the coarse wiring profile of $X$ into $Y$ grows linearly.
\end{abstract}

\section{Introduction}
Coarse wiring profiles were introduced by Barratt-Hume as a coarse geometric variant of thick topological embeddings previously considered by Kolmogorov-Barzdin and Gromov-Guth \cite{barrett2021thick,Barzdin1993,GG-KB}. We first recall the key definitions from \cite{barrett2021thick}:

\begin{definition}[Wiring]\label{wiring def}
    Let $\Gamma, \Gamma'$ be graphs. A \textbf{wiring} of $\Gamma$ into $\Gamma'$ is a continuous map $f : \Gamma \to \Gamma'$ which maps vertices to vertices and edges to unions of edges. A wiring $f$ is a \textbf{coarse $k$-wiring} if
    \begin{enumerate}
        \item the restriction of $f$ to $V\Gamma$ is $\leq \text{$k$-to-$1$}$ i.e $|\{v \in V\Gamma | f(v) = w)| \leq k$ for all $w \in V\Gamma'$.
        \item each edge $e \in E\Gamma'$ is contained in at most $k$ of the image paths of $f$.
    
    \end{enumerate}
\end{definition}

\noindent The \textbf{volume} of a wiring $\vol(f)$ is the number of vertices in its image.

\begin{definition}[Wiring Profile]
Let $\Gamma$ be a finite graph and $Y$ be a graph. We denote by $\text{wir}^k(\Gamma \to Y)$ the minimal volume of a coarse $k$-wiring of $\Gamma$ into $Y$. If no such coarse $k$-wiring exists, we say $\text{wir}^k(\Gamma \to Y) = \infty$. Let $X$ and $Y$ be graphs. The $k$-\textbf{wiring profile} of $X$ into $Y$ is the function
\begin{equation*}
    \wir_{X \to Y}^k (n) = \max\Big\{ \wir^k(\Gamma \to Y) \mid \Gamma \leq X \text{ , } |\Gamma| \leq n  \Big\}
\end{equation*}
\end{definition}
\noindent Coarse wiring profiles are typically considered with respect to the following partial order. Given two functions $f,g:\mathbb{N}\to\mathbb{N}$, we write $f\lesssim g$ if there is a constant $C$ such that $f(n)\leq Cg(Cn)+C$ for all $n$, and we write $f\simeq g$ if $f\lesssim g$ and $g\lesssim f$. The goal of the paper is the following:

\begin{theorem}\label{thm:wireTZ2} Let $T_3$ denote the infinite $3$-regular tree, and let $\mathbb{Z}^2$ denote the 2-dimensional integer lattice. We have
\[
 \wir_{T_3 \to \mathbb{Z}^2}^k (n)\simeq n.
\]
\end{theorem}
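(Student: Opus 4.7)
The plan is to establish matching lower and upper bounds $\wir_{T_3 \to \mathbb{Z}^2}^k(n) \gtrsim n$ and $\wir_{T_3 \to \mathbb{Z}^2}^k(n) \lesssim n$.

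The lower bound is essentially trivial from the definitions. I would pick any finite subtree $\Gamma \leq T_3$ with $|\Gamma| = n$ and observe that for any coarse $k$-wiring $f : \Gamma \to \mathbb{Z}^2$, the restriction to $V\Gamma$ is at most $k$-to-$1$, so the image hits at least $n/k$ vertices. Hence $\vol(f) \geq n/k$, giving $\wir^k(\Gamma \to \mathbb{Z}^2) \geq n/k$ and therefore $\wir^k_{T_3 \to \mathbb{Z}^2}(n) \geq n/k \gtrsim n$ for every fixed $k \geq 1$.

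For the upper bound, I would show the stronger statement that a coarse $1$-wiring suffices. Given any finite subgraph $\Gamma \leq T_3$ with $|\Gamma| \leq n$, note that $\Gamma$ is itself a forest in which every vertex has degree at most $3$; by embedding each component separately (or by completing $\Gamma$ to a tree inside $T_3$ at negligible cost), it suffices to treat the case where $\Gamma$ is a tree of maximum degree $\leq 3$ on at most $n$ vertices. The main embedding theorem announced in the abstract then produces a topological embedding of $\Gamma$ into $\mathbb{Z}^2$ mapping vertices to vertices and whose image touches at most $\tfrac{7}{3} n$ lattice vertices. Such a topological embedding is injective on vertices and sends distinct edges to interior-disjoint lattice paths, so it is automatically a coarse $1$-wiring (and hence a coarse $k$-wiring for every $k$) with volume at most $\tfrac{7}{3} n$. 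This yields $\wir^k_{T_3 \to \mathbb{Z}^2}(n) \leq \tfrac{7}{3}n$.

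The main obstacle is clearly the embedding theorem itself, not the reduction to it. My strategy for producing such an embedding would be a recursive centroid-style decomposition: choose a vertex $v$ whose removal splits $\Gamma$ into subtrees of size at most $n/2$, allocate disjoint rectangular regions of $\mathbb{Z}^2$ to each piece with areas proportional to their sizes, embed each piece recursively, and stitch them together by short lattice paths through a designated routing corridor adjacent to the image of $v$. A naive analysis of this scheme gives a linear bound with some constant, and the substantive work — which I expect to take up most of the paper — is the careful bookkeeping required to drive the constant down to $\tfrac{7}{3}$, presumably by reusing grid vertices between subtree interiors and connecting paths, and by orienting the split so that the routing corridor lies along the boundary of an already-used region rather than consuming fresh vertices.
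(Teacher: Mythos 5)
Your outer skeleton matches the paper's: the lower bound $\wir^k_{T_3\to\mathbb{Z}^2}(n)\geq n/k$ is exactly the trivial counting argument (which the paper does not even bother to record), and the upper bound is reduced, correctly, to exhibiting a coarse $1$-wiring of volume at most $C|\Gamma|$ for each finite subtree $\Gamma$ of $T_3$. The gap is that everything you say about producing that $1$-wiring is a declaration of intent rather than an argument: the construction and its volume analysis are the entire technical content of the paper, and you replace them with ``a naive analysis of this scheme gives a linear bound with some constant'' plus deferred ``bookkeeping.'' For $\simeq n$ you indeed do not need the constant $\frac{7}{3}$, but you do need to verify that \emph{some} constant works, and your centroid-plus-rectangles scheme is silent on precisely the points where separator-based layouts require care: how the rectangles allotted to the (up to three) subtrees at a centroid are shaped and packed so that the recursion can continue (aspect ratios must be controlled or the pieces stop fitting), how the connecting paths reach the designated ports without crossing the recursively embedded interiors, and why the total length of all connecting paths, summed over all $O(\log n)$ levels of the recursion, is $O(n)$ rather than $O(n\log n)$. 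This is essentially Valiant's layout theorem, and none of its work is done here.

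For comparison, the paper avoids centroids and bounding rectangles entirely. It roots $\Gamma$ at a leaf, so every vertex has at most two children, and defines $S(\Gamma,v)$ recursively: place $v$ at the origin, translate the wiring of the \emph{larger} child's subtree up the positive $y$-axis just far enough to clear the origin, and rotate the wiring of the \emph{smaller} child's subtree by $90^\circ$ clockwise before translating it along the positive $x$-axis; disjointness is automatic because the two children occupy the open north-east and south-east quadrants. The volume is then bounded via $\vol(f)=1+\sum_{vw\in E\Gamma}d_{\mathbb{Z}^2}(f(v),f(w))$ by showing the worst case is governed by perfect binary trees $B_n$ with subdivisions pushed onto the most-rotated leaves, yielding the recursion $V(n)\leq\frac{7}{12}+\frac{1}{2}V(n-1)+\frac{1}{4}V(n-2)$ and hence $V(n)\leq\frac{7}{3}$. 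If you pursue your route, the cleanest repair is to abandon area-proportional rectangles and instead bound total edge length directly, with an explicit rule for where each subtree sits relative to its parent --- which is, in effect, what the paper's size-ordered north/east recursion does.
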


\noindent To achieve this, we show that the $1$-wiring profile from $T_3$ to $\mathbb{Z}^2$ is linear. The framework for $1$-wirings into $\mathbb{Z}^2$ essentially matches the study of the theoretical model for VLSI. As such, a stronger result can be found in \cite{Valiant}, in which the author provides a bound on the area of the smallest rectangle containing the image graph with weaker constants. We obtain the following corollary by applying the permanence properties of wiring profiles proved in \cite{barrett2021thick}.

\begin{corollary} Let $Y$ be a Cayley graph of a finitely generated group $G$ which is not virtually cyclic. If, for all $k$,
\[
    \wir_{T_3 \to Y}^k (n) \not\simeq n
\]
then $G$ is amenable but not elementary amenable.
\end{corollary}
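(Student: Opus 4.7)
The plan is to argue by contraposition, showing that if $G$ is either non-amenable, or elementary amenable and not virtually cyclic, then there exists $k$ with $\wir_{T_3\to Y}^k(n)\simeq n$.

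For the non-amenable case, I would appeal to the classical fact (Benjamini--Schramm, or equivalently Whyte's geometric proof of von Neumann) that a non-amenable Cayley graph coarsely contains $T_3$. This coarse embedding is in particular a coarse $k$-wiring of linear volume for some $k$ depending only on the embedding's parameters, via the subgraph/embedding permanence established in \cite{barrett2021thick}.

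For the elementary amenable case, the strategy is to combine Theorem~\ref{thm:wireTZ2} with the composition permanence of \cite{barrett2021thick}: if $\mathbb{Z}^2$ admits a regular map into $Y$, then the linear wiring $T_3\to\mathbb{Z}^2$ pulls back to a linear wiring $T_3\to Y$. I would then invoke the structure of finitely generated elementary amenable groups from Chou's work together with Gromov's theorem: any such group is either virtually nilpotent or has exponential growth. In the virtually nilpotent case (necessarily of Hirsch length $\geq 2$ once virtually cyclic is excluded), a Mal'cev basis supplies a quasi-isometrically embedded $\mathbb{Z}^2$, so the above composition applies. In the exponential-growth case, I would climb the Chou hierarchy inductively, using the permanence of linear wiring profiles under group extensions and direct limits to propagate linearity from the $\mathbb{Z}^2$ wiring at the bottom of the hierarchy up to $Y$.

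The main obstacle I expect is precisely this last step. Exponential-growth elementary amenable groups such as $BS(1,n)$ and the lamplighter $\mathbb{Z}/2\wr\mathbb{Z}$ are amenable --- hence cannot coarsely contain $T_3$ --- and do not contain $\mathbb{Z}^2$ as a subgroup, so neither the non-amenable argument nor a direct subgroup argument applies. The delicate point is verifying that the extension permanence in \cite{barrett2021thick} can be iterated through the Chou hierarchy without the constant $k$ blowing up, and in particular that $\mathbb{Z}$-by-$\mathbb{Z}$ HNN extensions and wreath products inherit a linear $T_3$-wiring from the wiring into their $\mathbb{Z}^2$-like factors.
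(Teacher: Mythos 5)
Your overall architecture (contraposition; Benjamini--Schramm for the non-amenable case; a $\mathbb{Z}^2$ subgroup plus composition with Theorem \ref{thm:wireTZ2} for the virtually nilpotent case) matches the paper. The gap is exactly where you flagged it: the exponential-growth elementary amenable case. Your plan to ``climb the Chou hierarchy inductively, using the permanence of linear wiring profiles under group extensions and direct limits'' relies on a permanence property that is not established in \cite{barrett2021thick} and is not plausibly true in the form you need. Wiring profiles are monotone under regular maps of the \emph{target}, but an extension $1\to N\to G\to Q\to 1$ gives no regular map from a Cayley graph of $N$ or of $Q$ into a Cayley graph of $G$ in general (distortion of $N$, non-splitting of $Q$), so there is no way to ``propagate'' a linear wiring of $T_3$ into $\mathbb{Z}^2$ up through the hierarchy to $Y$. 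Your own examples $BS(1,n)$ and $\mathbb{Z}/2\wr\mathbb{Z}$ are precisely the cases this scheme fails to reach, and you leave them unresolved.

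The missing idea is a different theorem of Chou \cite{Chou}: a finitely generated elementary amenable group that is not virtually nilpotent contains a non-abelian free subsemigroup. The semigroup homomorphism from the free subsemigroup on two generators into $G$ is injective and Lipschitz, so it realises the rooted binary tree --- and hence $T_3$ --- as the source of a regular map into $Y$; regular maps give linear wiring profiles for sufficiently large $k$ by the permanence results of \cite{barrett2021thick}. This disposes of $BS(1,n)$, lamplighters, and every other exponential-growth elementary amenable group in one stroke, with no induction over the hierarchy. (Your Mal'cev-basis argument in the nilpotent case is fine, though heavier than needed: the paper just takes a nontrivial central element $x$ of a finite-index torsion-free subgroup and any $y$ sharing no common power with $x$.)
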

\begin{proof}
    When $G$ is virtually nilpotent but not virtually cyclic, $G$ admits a $\mathbb{Z}^2$ subgroup. To see this, note that $G$ admits a finite-index torsion-free subgroup $G'$ with non-trivial centre, let $x$ be such a non-trivial element. Since $G$ is not virtually cyclic, there is some element $y\in G'$ such that no power of $y$ is equal to a power of $x$. It follows that $x,y$ generate a $\mathbb{Z}^2$ subgroup of $G'$, and therefore of $G$.
    Therefore, $\wir_{\mathbb{Z}^2 \to Y}^k (n) \simeq n$ for all sufficiently large $k$. Applying Theorem \ref{thm:wireTZ2} and \cite[Lemma 3.2]{barrett2021thick}, we see that $\wir_{T_3 \to Y}^k (n) \simeq n$ for all sufficiently large $k$. \par

    \vspace{1em}

    When $G$ is elementary amenable but not virtually nilpotent, it admits a non-abelian free subsemigroup \cite{Chou}, so there is a regular map $T_3\to Y$, and $\wir_{T_3 \to Y}^k (n) \simeq n$ for all sufficiently large $k$. Alternatively, if $G$ is not amenable, then it admits a quasi-isometric embedding of $T_3$ \cite{Benjamini-Schramm}, so again $\wir_{T_3 \to Y}^k (n) \simeq n$ for all sufficiently large $k$.
\end{proof}

\noindent It is very natural to ask whether the following stronger result holds.

\begin{Question} Let $Y$ be a Cayley graph of a finitely generated group $G$. If, for some $k$,
\[
    \wir_{T_3 \to Y}^k (n) \simeq n
\]
is $G$ necessarily virtually cyclic?
\end{Question}
\section*{Acknowledgements}
The author was funded by a University of Bristol Student Summership Bursary throughout this work and would like to thank David Hume for his kind supervision. We would also like to thank Louis Esperet for bringing the reference \cite{Valiant} to our attention.

\section{The $1$-wiring profile $T_3 \to \mathbb{Z}^2$ is linear}

To prove Theorem \ref{thm:wireTZ2}, we provide a linear upper bound for the $1$-wiring profile via an algorithm whose input is a subgraph $\Gamma \subset T_3$ and whose output is a $1$-wiring of $\Gamma$ into $\mathbb{Z}^2$ of volume less than $\frac{7}{3} |\Gamma|$. A $1$-wiring $f: \Gamma \to \mathbb{Z}^2$ is injective on the vertex set and maps each edge to a unique path in $\mathbb{Z}^2$. The amount of vertices in the image of a path is equal to the length of the image path in the taxicab metric on $\mathbb{Z}^2$, thus
\[
 \vol(f) = 1 + \sum_{vw \in E\Gamma} d_{\mathbb{Z}^2} (f(v), f(w))
\]
\noindent The algorithm is defined recursively. We assign some vertex $v \in \Gamma$ as a root to obtain a parent-child hierarchy. At each stage in the algorithm, we suppose we have wirings for the subtrees stemming from the children. We place our root at $(0, 0)$ and copy over the wiring for the first child, placing its root on the $y$-axis, just high enough so that the entire wiring is in the upper half plane. If there is a second child, we again copy its wiring but this time we rotate it $90^\circ$ degrees clockwise before placing its root on the positive $x$-axis, just far away enough so that every image point has a positive $x$-coordinate. \par
\vspace{1em}

\noindent Formally, the construction is given by $S(\Gamma, v)$, which takes a binary tree $\Gamma$ and some root vertex $v \in \Gamma$ as parameters and outputs a wiring for $\Gamma$ in $\mathbb{Z}^2$ rooted at $v$. Denoting the children of $v$ by $v_1$ and $v_2$, the \textit{child subtree}, denoted $\Gamma_{v_i}$ of $v_i$ is the connected component of $\Gamma$ containing $v_i$ but not $v$. Given a wiring $f$ for $\Gamma$, we define the \textit{connector}, denoted $\conn(f)$, to be $\conn(f) = |\min \{y \mid (x, y) \in f(\Gamma)\} |$; this is the extra length needed to connect a child to its root. We define $S: (\Gamma, v) \mapsto f$ as follows, where $f$ denotes the wiring $f: \Gamma \to \mathbb{Z}^2$ we return. \par

\begin{enumerate}
    \item Set $f(v) = (0,0)$.
    \item If $v$ has children.

    \begin{enumerate}
        \item If $v$ has only one child $w$. Calculate the wiring for $\Gamma_{w}$ and denote it $f_w$. We map the $u \in \Gamma_{w} \subset \Gamma$ to $f(u) = f_w(u) + (0, \conn(f_w) + 1)$.
        \item If $v$ has two children $w_1, w_2$, order them so that $|T_{w_1}| \geq |T_{w_2}|$. Calculate their wirings and denote them $f_1$ and $f_2$ respectively. For the largest child, we do the same as in $(a)$. We map the $u \in \Gamma_{w_1} \subset \Gamma$ to $f(u) = f_{1}(u) + (0, \conn(f_1) + 1)$. For the second wiring, we first rotate the image vertices $90^\circ$ degrees clockwise about the origin, denoting this $f_2^*$. We then map the $u \in \Gamma_{w_2} \subset \Gamma$ to $f(u) = f_{2}^*(u) +( \conn(f_2) + 1, 0 )$.
    \end{enumerate}
    \item Return $f$.
\end{enumerate}

\noindent To see that $f$ is a $1$-wiring, observe that at each step, the first child lies in the North-East quadrant and the second child lies in the South-East quadrant. Let $B_n$ be the $n$-th rooted perfect binary tree with an extra degree $1$ vertex placed above the traditional root. We have illustrated the image of $B_4$ in Figure \ref{fig:T_n Wiring}. Note also that $B_3$'s image is illustrated in blue and $B_2$'s is illustrated in red (after being rotated clockwise $90^\circ$). The colouring scheme is to supplement a future argument. \par

\begin{center}
    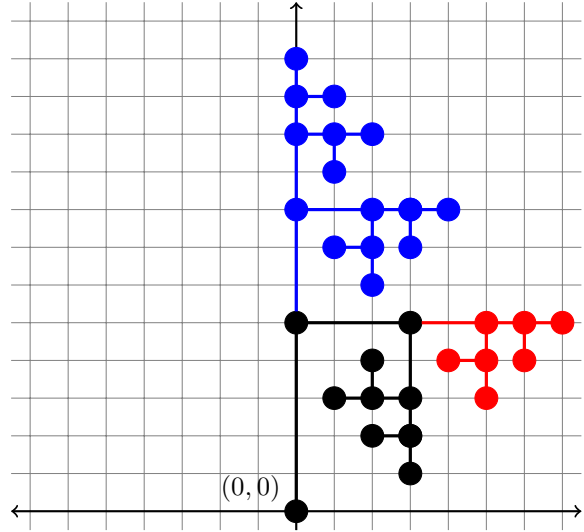
\begin{figure}[ht]
        \centering
        \begin{tikzpicture}
            \draw[step=0.5cm,gray,very thin] (-3.75,-2.75) grid (3.75,4.25);
            \draw[thick,<-] (-3.75,-2.5) -- (0,-2.5) ;
            \draw[thick,->] (0,-2.5) -- (3.75,-2.5) ;
            \draw[thick,->] (0,-2.75) -- (0,4.25);
            \filldraw (0, -2.5) circle [radius=.15cm];
            \filldraw (0, 0) circle [radius=.15cm];

            \node at (-0.6, -2.2) {$(0, 0)$};

            \filldraw[fill=blue, draw=blue] (0,1.5) circle [radius=.15cm];
            
            \filldraw[fill=blue, draw=blue] (0,2.5) circle [radius=.15cm];
            \filldraw[fill=blue, draw=blue] (1,1.5) circle [radius=.15cm];

            \filldraw[fill=blue, draw=blue] (0,3) circle [radius=.15cm];
            \filldraw[fill=blue, draw=blue] (0.5,2.5) circle [radius=.15cm];
            \filldraw[fill=blue, draw=blue] (1.5,1.5) circle [radius=.15cm];
            \filldraw[fill=blue, draw=blue] (0.5,2) circle [radius=.15cm];
            \filldraw[fill=blue, draw=blue] (1,1) circle [radius=.15cm];

            \filldraw[fill=blue, draw=blue] (0,3.5) circle [radius=.15cm];
            \filldraw[fill=blue, draw=blue] (0.5,3) circle [radius=.15cm];
            \filldraw[fill=blue, draw=blue] (1,2.5) circle [radius=.15cm];
            \filldraw[fill=blue, draw=blue] (2,1.5) circle [radius=.15cm];
            \filldraw[fill=blue, draw=blue] (1.5,1.5) circle [radius=.15cm];
            \filldraw[fill=blue, draw=blue] (1.5,1) circle [radius=.15cm];
            \filldraw[fill=blue, draw=blue] (0.5,1) circle [radius=.15cm];
            \filldraw[fill=blue, draw=blue] (1, 0.5) circle [radius=.15cm];

            \filldraw[fill=black, draw=black] (1.5,0) circle [radius=.15cm];
            \filldraw (1.5,-1) circle [radius=.15cm];
            \filldraw (1.5,-1.5) circle [radius=.15cm];
            \filldraw (1.0,-1.0) circle [radius=.15cm];
            \filldraw (1.5,-1.5) circle [radius=.15cm];
            \filldraw (1, -0.5) circle [radius=.15cm];
            \filldraw (0.5,-1) circle [radius=.15cm];
            \filldraw (1,-1.5) circle [radius=.15cm];
            \filldraw (1.5,-2) circle [radius=.15cm];

            \filldraw[fill=red, draw=red] (2.5,0) circle [radius=.15cm];
            \filldraw[fill=red, draw=red] (3,0) circle [radius=.15cm];
            \filldraw[fill=red, draw=red] (3.5,0) circle [radius=.15cm];
            \filldraw[fill=red, draw=red] (3,-0.5) circle [radius=.15cm];
            \filldraw[fill=red, draw=red] (2.5,-0.5) circle [radius=.15cm];
            \filldraw[fill=red, draw=red] (2,-0.5) circle [radius=.15cm];
            \filldraw[fill=red, draw=red] (2.5,-1) circle [radius=.15cm];

            \draw[black, very thick] (0,-2.5) -- (0,0);
            \draw[black, very thick] (0, 0) -- (1.5 ,0);
            \draw[black, very thick] (1.5, 0) -- (1.5,-2);
            \draw[black, very thick] (0.5,-1) -- (1.5,-1);
            \draw[black, very thick] (1, -1) -- (1,-0.5);
            \draw[black, very thick] (1,-1.5) -- (1.5,-1.5);

            \draw[blue, very thick] (0,0.15) -- (0,3.5);
            \draw[blue, very thick] (0,3) -- (0.5,3);
            \draw[blue, very thick] (0,2.5) -- (1,2.5);
            \draw[blue, very thick] (0.5,2.5) -- (0.5,2);
            \draw[blue, very thick] (0,1.5) -- (2,1.5);
            \draw[blue, very thick] (1.5,1.5) -- (1.5,1);
            \draw[blue, very thick] (1,1.5) -- (1,0.5);
            \draw[blue, very thick] (0.5,1) -- (1,1);

            \draw[red, very thick] (1.65,0) -- (3.5,0);
            \draw[red, very thick] (3,0) -- (3,-0.5);
            \draw[red, very thick] (2.5,0) -- (2.5,-1);
            \draw[red, very thick] (2,-0.5) -- (2.5,-0.5);
            
        \end{tikzpicture}
        \caption{The image of $B_4$}
        \label{fig:T_n Wiring}
    \end{figure}
\end{center}

\noindent We now turn our attention towards bounding the volume of this wiring. To motivate our approach, consider subdividing any edge in $B_4$ and the resultant change in volume of Figure \ref{fig:T_n Wiring}. The more rotations the edge we subdivide has undergone, the greater the volume added per subdivision. However, add too many and one changes the size ordering which results in the algorithm outputting a new wiring of lesser volume. Thus, maximising the volume of our tree is tantamount to maximising the amount of subdivisions on rotated edges whist adhering to the constraints of the size ordering. To prove Theorem \ref{thm:wireTZ2}, we simplify the problem by considering only one such size-ordering at a time and study how we can maximise the volume by distributing subdivisions according to this ordering. As an example, to subdivide the black (only) edge that has been rotated $4$ times and keep the same size ordering, we must apply at least $15$ other subdivisions.  For the following argument, we suppose for any graph that the root vertex $v$ on which we initiated the algorithm was of order $1$. We use this root to give us the parent-child hierarchy. We write $S(\Gamma, v) = f_\Gamma$. \par

\vspace{1em}

\noindent Let's standardise the meaning of our "size-ordering". Given our rooted tree $\Gamma$, we assign a left-to-right order for each pair of children by size so that the left child's subtree has more vertices than the right's. We define the \textit{reduction} of $\Gamma$, denoted $R(\Gamma)$, to be the series-reduced tree homeomorphic to $\Gamma$ with the same parent-child hierarchy and left-right ordering as $\Gamma$ imposed on its vertices. Note that it is the right children that get rotated. The behaviour of the volume of our wiring $S(\Gamma, v) $ is entirely determined by the reduction of $\Gamma$. We write $\Gamma_1 \sim \Gamma_2$ if $R(\Gamma_1) = R(\Gamma_2)$. A \textit{reduction} $R$ is the result of reducing a tree; it is a finite, ordered, rooted, full binary tree. For a reduction $R$, let $V_R$ denote the maximal volume of a graph $\Gamma \sim R$.
\[
V_R = \sup \bigg\{  \frac{\vol( f_\Gamma)}{|\Gamma|} \text{        }\bigg\rvert \text{   
     }\Gamma \sim R \bigg\}
\]
This $V_R$ is attained as a limit of graphs that are subdivisions of $R$ adhering to the ordering. To prove Theorem \ref{thm:wireTZ2}, we show that for every reduction $R$, $V_R \leq \frac{7}{3}$. Fortunately, we need only show this is true for the $R = B_n$.

\begin{lemma}
    Let $R$ be a reduction, there exists $n$ such that $V_R \leq V_{B_n}$.
\end{lemma}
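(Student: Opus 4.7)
The plan is to prove $V_R \leq V_{B_n}$ for $n$ equal to the depth of $R$. The idea is to embed any near-optimal $\Gamma \sim R$ into a graph $\Gamma' \sim B_n$ whose volume-to-size ratio matches $\Gamma$'s asymptotically.

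First, I would establish that $R$ can be realized as an ordered subtree of $B_n$. Since $B_n$ is perfectly symmetric at every depth (every subtree at a fixed level is an isomorphic perfect binary tree), the left-right ordering of $R$ (determined by subtree sizes) can always be made compatible with $B_n$'s canonical structure; leaves of $R$ at depth $d < n$ then correspond to internal nodes of $B_n$ at which a perfect binary subtree of depth $n-d$ will be attached in the construction below.

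Second, given any $\Gamma \sim R$ with ratio close to $V_R$, I would first uniformly multiply all of $\Gamma$'s subdivisions by a large factor $M$ to obtain $\Gamma_M \sim R$ with the same ratio, then attach at each embedded leaf of $R$ the corresponding missing perfect binary subtree of $B_n$. To ensure that the resulting graph $\Gamma_M'$ has reduction exactly $B_n$ as an ordered tree (which requires equal sibling sizes at every branching node of $B_n$), I would add balancing subdivisions at appropriate places. Provided the extensions are of sufficiently small order relative to the growing $|\Gamma_M|$, the ratio $\vol(f_{\Gamma_M'})/|\Gamma_M'|$ approaches $\vol(f_\Gamma)/|\Gamma|$ as $M \to \infty$, giving $V_{B_n} \geq V_R$.

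The main obstacle I anticipate is controlling the balancing subdivisions needed for $\Gamma_M' \sim B_n$: since $B_n$ is perfectly balanced while $R$ may have strict size imbalances at its branch nodes, the balancing at such a node might require a number of subdivisions scaling linearly with $|\Gamma_M|$. To avoid diluting the ratio, one must argue that the balancing subdivisions can be placed on edges whose rotation depth gives them a favorable volume-to-vertex contribution (at least matching the ratio of the original $\Gamma$). A careful analysis of the connector contributions $\conn(f_w)$ at the extended nodes, exploiting that each right-child subtree can be manipulated independently, should show that this is achievable, so that the asymptotic ratio of $\Gamma_M'$ is no less than that of $\Gamma_M$, yielding $V_R \leq V_{B_n}$.
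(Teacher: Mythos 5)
Your overall strategy---realise $R$ as an ordered subtree of $B_n$ with $n$ the depth of $R$, extend a near-optimal sequence of graphs $\Gamma\sim R$ to graphs with reduction $B_n$ by attaching the missing subtrees at leaves, and check the volume-to-size ratio survives the limit---is exactly the monotonicity argument the paper uses (stated there as: if $R_1$ is an ordered subtree of $R_2$ then $V_{R_1}\leq V_{R_2}$, applied with $R_2=B_n$).

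Where you go astray is in your ``main obstacle''. You assert that $\Gamma_M'\sim B_n$ requires \emph{equal} sibling subtree sizes at every branching node of $B_n$, which forces you to contemplate $\Theta(|\Gamma_M|)$ balancing subdivisions and then to claim, without proof, that these can be placed so as to contribute volume at a per-vertex rate at least $V_R$. That claim is not obviously true (the attached subtrees sit at bounded rotation depth, so their leaves need not yield a per-subdivision volume as large as $V_R$, which can exceed $2$), so as written your argument has a gap at its load-bearing step. But the obstacle is an artifact of a misreading: the left-right order in the definition of the reduction only requires the left child's subtree to be \emph{at least as large} as the right's, and since $B_n$ is the perfect binary tree---symmetric under swapping the children at any vertex---every subdivision of $B_n$ has reduction isomorphic to $B_n$ as an ordered tree, with no size constraint at all. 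So no balancing is needed: attach the fixed finite forest $B_n\setminus R$ with no extra subdivisions, adding $O(1)$ vertices and only increasing volume, and the ratio is unchanged in the limit; this is precisely the paper's proof. The only genuine residual issue (which the paper also glosses over) is that these $O(1)$ added vertices could flip a size ordering at an ancestor node inside $R$, changing which child gets rotated; this is repaired by $O(1)$ further subdivisions on the affected sibling and again does not affect the limit. Deleting the balancing step and substituting this observation makes your proof correct and essentially identical to the paper's.
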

\begin{proof}
    We first show a stronger result. Suppose that a reduction $R_1$ is a subtree of another reduction $R_2$ and that the parent-child hierarchy and left-right orderings for their vertices in common are equivalent, then $V_{R_1} \leq V_{R_2}$. \par

    \vspace{1em}
    
    \noindent Let $(\Gamma_m)_{m=1}^{\infty}$ be a sequence of graphs with $\lim_{m \to \infty} |\Gamma_m| =\infty$ that attains the supremum $V_{R_1}$, i.e. $\Gamma_m \sim R_1$ for all $m$, and
    \[
        \lim_{m\to\infty} \frac{f_{\Gamma_m}}{|\Gamma_m|} = V_{R_1}
    \]
    We now create a new sequence $(\Gamma'_m)_{m=1}^{\infty}$ by taking the vertices in $R_2 \setminus R_1$ and the graphs $\Gamma_m$, and appending the $R_2 \setminus R_1$ onto the leaves of each $\Gamma_m$ so that $\Gamma'_m \sim R_2$. We never subdivide any edges between the elements of $R_2 \setminus R_1$. Thus,
    \[
        \lim_{m\to\infty} \frac{f_{\Gamma'_m} + |R_2 \setminus R_1| }{|\Gamma_m| + |R_2 \setminus R_1| } = \lim_{m\to\infty} \frac{f_{\Gamma_m}}{|\Gamma_m|} = V_{R_1}
    \]
    and thus $V_{R_1} \leq V_{R_2}$. 

    \vspace{1em}
    
    \noindent Let $L(R)$ denote the maximal quantity of two-child parents in a path from root to leaf in $R$. The lemma follows as all reductions $R$ with $L(R) = n$ are subgraphs of $B_n$. 
\end{proof}
\noindent The remainder of the paper is dedicated to showing that $V(n) = V_{B_n}$ is uniformly bounded. Firstly, let's define a method for calculating $V_R$. 

\begin{lemma}
Enumerating the leaves of a reduction $R$ from left to right in order by $1, \dots, k$, let $v_i$ denote the volume per subdivision of leaf $i$ and $s_i \in [0, 1]$ denote the proportion of subdivisions we apply to leaf $i$. Then
\[
    V_R = \max \{ v_1 s_1 + \dots + v_k s_k  \mid s_1 + \dots +s_k = 1 \text{ and the $s_i$ satisfy the size-ordering}\}
\]
\end{lemma}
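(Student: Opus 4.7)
The plan is to express $V_R$ as the value of an explicit linear program, identifying the $s_i$ of the statement with asymptotic proportions of subdivisions on the $k$ leaf-edges of $R$, and $v_i$ with the corresponding volume contribution per unit of subdivision.

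We first parametrize: each $\Gamma\sim R$ is uniquely determined by a vector $(m_e)_{e\in ER}$ of nonnegative integers specifying how many times each edge $e$ of $R$ is subdivided, with $|\Gamma| = |R| + \sum_e m_e$. Running the algorithm $S$ recursively and tracking for each subtree wiring $f_v$ the three quantities $X(f_v),Y(f_v),\conn(f_v)$ (the maximum $x$-coordinate, maximum $y$-coordinate, and connector), an induction shows that each is a maximum of affine functions of $(m_e)$; the only nonlinearity in $S$ enters through $\conn=|\min y|$ at the two-child step. Consequently each edge contribution $d_{\mathbb{Z}^2}(f(v),f(w)) = \conn(f_w)+1$ is piecewise-affine in $(m_e)$, and so is $\vol(f_\Gamma)=1+\sum_{vw\in E\Gamma}d_{\mathbb{Z}^2}(f(v),f(w))$.

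Next we take the limit. For any sequence $\Gamma_n\sim R$ with $|\Gamma_n|\to\infty$, pass to a subsequence along which (i) the normalized proportions $s_e^{(n)}=m_e^{(n)}/\sum_{e'}m_{e'}^{(n)}$ converge to some $(s_e)$ with $\sum_e s_e=1$ and (ii) the active branch of each nested $\max$ stabilizes. Along such a subsequence, $\vol(f_{\Gamma_n})/|\Gamma_n|$ converges to an affine combination $\sum_e c_e s_e$, where each $c_e$ depends only on the rotation history of $e$ inside $R$. The size-ordering condition on $\Gamma_n$ — that at each internal node $v$ of $R$ the left-child subtree has weakly more vertices than its right sibling — passes to the limit as the linear inequality $\sum_{e\in L_v}s_e \geq \sum_{e\in R_v}s_e$ at every internal node $v$ of $R$.

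The final step reduces this edge-LP to the claimed leaf-LP, and this is where we expect the main difficulty. The key claim is that for any internal edge $e$ and any leaf $\ell$ strictly below $e$, we have $c_e\leq v_\ell$, where $v_\ell$ is the coefficient at the leaf-edge incident to $\ell$. Given this, if $(s_e)$ is optimal with $s_e>0$ on some internal edge $e$, we transfer that mass onto the leftmost leaf $\ell^\ast$ below $e$: this choice maximally reinforces each $\sum_{L_v}\geq\sum_{R_v}$ at nodes $v$ below $e$, so every constraint is preserved, while the objective weakly increases because $v_{\ell^\ast}\geq c_e$. Hence we may assume $(s_e)$ is supported on leaf-edges, reducing to the stated LP. Establishing the inequality $c_e\leq v_\ell$ is the main obstacle: it amounts to a rotation-tracking argument showing that a subdivision on the deeper (leaf) edge produces at least as many cascading contributions to ancestor connectors as a subdivision on the shallower internal edge $e$, since the deeper chain passes through strictly more rotations as it ascends to the root.
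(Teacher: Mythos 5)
Your proposal follows essentially the same route as the paper: both reduce to the leaf-edge linear program by transferring all internal-edge subdivisions to the leftmost leaf below, justified by the claim that deeper edges yield at least as much volume per subdivision, and then pass to the limit of proportions. The inequality $c_e \leq v_\ell$ that you flag as the main remaining obstacle is precisely the step the paper also leaves as a one-sentence assertion (``a subdivision of an edge close to the root gives less volume per vertex added than a subdivision of a deeper edge''), so your write-up is, if anything, more explicit than the original about the piecewise-affine structure and the preservation of the size-ordering constraints.
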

\begin{proof}
    Let $R$ be a reduction. We first show that we need only subdivide the edges connecting to leaves. \par

    \vspace{1em}

    \noindent Let $(\Gamma_n)_{n=1}^{\infty}$ be a sequence of graphs with $\Gamma_n \sim R$ attaining $V_R$. Every $\Gamma_n$ is a subdivision of $R$ satisfying the left-right ordering. A subdivision of an edge close to the root gives less volume per vertex added than a subdivision of a deeper edge. To produce our desired sequence of graphs $\Gamma'_n$, simply move all of the subdivisions of non-leaf edges to their left-most leaf child. The volume of this new sequence is greater than or equal to the volume of the original and, thus, also attains the supremum.\par    

    \vspace{1em}

    \noindent Following this approach of only subdividing leaf-edges, after applying $N$ subdivisions to $R$ the volume over the order is given by
    \[
    \frac{v_1 (s_1 N) + \dots + v_k (s_k N) + \vol(f_R)}{ N + |R|}
    \]
    whose limit as $N$ grows large yields the desired result.
\end{proof}

\noindent To bound $V_{B_n}$, we exploit the self-similar nature of our image of $B_n$. Recall the illustration of $B_4$ in Figure \ref{fig:T_n Wiring}. We see that $B_n$ is comprised of $B_{n-1}$ (in blue), $B_{n-2}$ (in red) and a spiral shape (in black) which we call $S_n$. We define $S_n$ to be the reduction that outputs the structure of the black nodes in Figure \ref{fig:T_n Wiring}, but such that the blue and red sections are straight paths (i.e. single leaves with many subdivisions) of the same quantity of nodes as in $B_{n-1}$ and $B_{n-2}$ respectively. Once we have calculated the value of $V_{S_n}$, we obtain the value of $V_{B_n}$ by replacing the two straight paths with copies of $B_{n-1}$ and $B_{n-2}$. As the edges in the spiral have undergone the most rotations, we place as many subdivisions there as possible. This leaves a quarter of the subdivisions left for the red $B_{n-2}$ and a half for the blue $B_{n-1}$. We have, note we must remove the straight paths from $S_n$,
\[
V_{B_n} = \frac{1}{2}( V_{B_{n-1}} - 1) + \frac{1}{4} (V_{B_{n-2}}-1) + V_{S_n}
\]
We will shortly prove that $V_{S_n} \leq \frac{4}{3}$ for all $n$. Thus,
\[
    V(n) \leq \frac{7}{12} + \frac{1}{2}V(n-1) + \frac{1}{4}V(n-2)
\]
with initial conditions $V(0) = 1$ and $V(1) = 1$. This is soluble, $V_n$ is monotone increasing and bounded above by its limit of $\frac{7}{3}$. Thus $V(n) \leq \frac{7}{3}$ for all $n$. We finish with the proof that $S_n$ is bounded above by $\frac{4}{3}$. \par
\begin{lemma}
    For all $n$, $V_{S_n} \leq \frac{4}{3}$.
\end{lemma}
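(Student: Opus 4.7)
The plan is to bound $V_{S_n}$ via the linear programming characterisation from the previous lemma. I would first describe $S_n$ as a concrete ordered rooted binary tree, then compute the per-leaf volume increment $v_i$, and finally solve the resulting linear program subject to the size-ordering constraints.

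Unpacking the recursion of the algorithm $S(\cdot,\cdot)$: each time the algorithm descends into the second (smaller) child, the wiring of that child is rotated $90^\circ$ clockwise. In $S_n$, the first child at each such nested recursion is the straight-path leaf standing in for $B_{n-k}$, while the second child continues the spiral. This gives $S_n$ the structure of a caterpillar with a spine of length roughly $n$, one straight-path leaf hanging off each spine vertex, and a short terminal structure at the innermost level.

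Next I would compute $v_i$ for each straight-path leaf. A single subdivision extends the leaf's image path by one unit in $\mathbb{Z}^2$. Although this does not directly change $\conn$ of its own subtree (the leaf extends "upward" in its local frame), the enlarged bounding box can cascade outward through the alternating $90^\circ$ rotations: after two outer rotations, what was an upward extension contributes to the connector length used by an even-outer subtree, and that connector lies on an edge whose length directly feeds into $\vol(f_\Gamma)$. Tracking this cascade through the direction cycle up/right/down/left as one walks out along the spine, I expect each $v_i$ to be a small constant depending on $k \bmod 4$.

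With the $v_i$ in hand, the size-ordering constraint forces the straight-path leaf at spine level $k$ to carry at least as much $s$-mass as all the deeper leaves combined; inductively, the total mass reaching level $k$ or deeper is at most $2^{-k+1}$. Substituting into the LP objective $\sum_i v_i s_i$ gives a convergent geometric series that I expect to close out at $\tfrac{4}{3}$ uniformly in $n$, either by direct summation or by setting up a recursion on $n$ analogous to the one the main argument uses for $V(n)$. The main obstacle will be the bookkeeping for $v_i$: the nested $90^\circ$ rotations and the asymmetric role of $\conn$ (which only measures depth below the $x$-axis, not rightward extent) make it easy to miscount the cascade by a constant factor, and since $\tfrac{4}{3}$ is presumably tight any small error in the $v_i$ would defeat the bound.
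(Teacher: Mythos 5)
Your overall frame (reduce to the linear program of the previous lemma, compute per-leaf rates $v_i$, and exploit the geometric decay of mass forced by the size-ordering) is the right starting point, but two of your structural assumptions are wrong, and the second of them hides the actual content of the proof. First, $S_n$ is not a caterpillar: only the outermost two first-children (the blue $B_{n-1}$ and the red $B_{n-2}$) are replaced by straight paths, and the remaining black ``spiral'' still contains a full, twice-rotated perfect binary tree of depth $n-2$. So besides the two straight-path leaves there are $2^{n-2}$ further leaves, indexed $0,\dots,2^{n-2}-1$ from left to right, and the number of rotations of leaf $l$ is $B(l)+2$, where $B(l)$ counts the $1$s in the binary expansion of $l$. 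Relatedly, the rates are not ``a small constant depending on $k \bmod 4$'': each additional $180^{\circ}$ of rotation makes a leaf's extension feed into one more connector on the way back to the root, so $v_l$ grows linearly, $v_l \le 2+\lfloor B(l)/2\rfloor$.

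The essential missing idea is what the paper calls \emph{shadowing}. Because there are exponentially many leaves, the size-ordering alone does not force geometric decay of mass with depth: the uniform distribution $s_l = 2^{-(n-2)}$ satisfies every sibling constraint with equality, and if each leaf really earned $v_l = 2+\lfloor B(l)/2\rfloor$ the objective $\sum_l s_l v_l$ would be about $2 + (n-2)/4$, so the resulting estimate $\tfrac12+\tfrac14+\tfrac14\sum_l s_l v_l$ would exceed $\tfrac43$ for all large $n$ and is in fact unbounded. The point is that the rates $v_l$ are not simultaneously achievable: two leaves whose extensions enlarge a common ancestor's bounding box in the same direction only pay for the larger of the two, so the shadowed leaf's marginal rate collapses to $1$. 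The paper's proof argues that the optimum therefore concentrates mass on a single mutually non-shadowing chain of leaves, namely position $0$ together with the positions $p_j$ whose binary expansion is $j$ ones followed by zeros, carrying mass $s_{p_j}=2^{-(j+1)}$ at rate $2+\lfloor j/2\rfloor$; only then does the geometric series close out at $\tfrac43 - 2^{-n}(\cdots)\le\tfrac43$. Since the bound is attained in the limit, there is no slack available to absorb either of these miscounts.
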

\begin{proof}
    We must distribute the subdivisions of the leaves to maximise the volume. The left-most two leaves (the straight paths in place of the blue and red sections) provide a volume per subdivision of $1$. Thus, we assign the minimum amount of subdivisions to them, i.e. $\frac{1}{2}$ and $\frac{1}{4}$. Enumerate all of the other leaves from left to right by the numbers $0, \dots 2^{n-2} - 1$. Again, let $s_l$ denote the proportion of subdivisions we assign to leaf $l$ and $v_l$ denote the volume per subdivision of leaf $l$. Let $B(x)$ denote the amount of $1$s in the binary expansion of $x$. The amount of rotations a leaf $l$ undergoes is given by $B(l) + 2$ and $v_l$ is bounded by $2 + \lfloor B(l)/2 \rfloor$. In practise, very few of the leaves attain this bound. This is due to what we call shadowing. In Figure $\ref{fig:T_n Wiring}$, see that if $s_2 > s_1$, then $v_1 = 1$ and vice versa. Thus, if we have assigned any subdivisions to leaf $1$, we gain nothing from assigning them to leaf $2$. However, this is not the case for leaf $3$. The most rotated leaf produces no shadows. Thus, for $S_4$, our maximum is attained by assigning $s_3 = s_2 =\frac{1}{4}$, $s_1 = 0$ and $s_0 = \frac{1}{2}$. This gives (recall the first two leaves), $V_{S_4} = (1)\frac{1}{2} + (1)\frac{1}{4} + (2s_0 + 2s_2 + 3s_3)\frac{1}{4} = \frac{21}{16}$. \par

    \vspace{1em}

    \noindent For the general case, recall the more rotations a leaf has undergone, the more volume per subdivision. The more shadowed leaves we still extend, the less the volume. Thus, at any given parent $v$, we assign half the possible subdivisions to its left-most leaf in the subtree $\Gamma_v$ and half of them to be distributed by its right child who does the same. Now, we have no shadows interfering with each other and the depth of all the leaves we assign to is maximised. The leaves we assigned subdivisions to were the leaves at positions $0$ and $p_j$ with $p_j = \sum_{i = n-2-j} ^ {n-3} 2^i$. The leaf $0$ has $v_0 = 2$ and $s_0 = \frac{1}{2}$. The leaves $p_j$ have $v_{p_j} = 2 + \lfloor B(p_j)/2 \rfloor = 2 + \lfloor j/2 \rfloor$. Assigning the maximum amount of subdivisions to the deepest vertex gives $s_0 = \frac{1}{2}$ and $s_{p_j} = \frac{1}{2^{j+1}}$ for $1 \leq j \leq n-3$ and $s_{p_{n-2}} = \frac{1}{2^{n-2}}$. Thus,
    \begin{align*}
            V_{S_n} &= \frac{1}{2} + \frac{1}{4} + \frac{1}{4} \bigg( s_0 v_0 + s_{p_{n-2}}v_{p_{n-2}} + \sum_{j=1}^{n-3} s_{p_{j}}v_{p_{j}} \bigg) \\
            V_{S_n} &= 1 + \frac{2 + \lfloor(n-2)/2\rfloor}{2^{n}} + \sum_{j=1}^{n-3} \frac{2 + \lfloor j/2 \rfloor}{2^{j+3}}
    \end{align*}
which simplifies to
\[
V_{S_n} = \frac{4}{3} - \frac{1}{2^n}\bigg(2 + \left\lfloor\dfrac{n-1}{2}\right\rfloor - \left\lfloor\dfrac{n}{2}\right\rfloor  \bigg) \leq \frac{4}{3}
\]
\end{proof}

\vspace{5em}

\end{document}